\DeclareRobustCommand{\rvdots}{%
  \vbox{
    \baselineskip4\p@\lineskiplimit\z@
    \kern-\p@
    \hbox{.}\hbox{.}\hbox{.}
  }}
\theoremstyle{plain}
\newtheorem{thm}{Theorem}[section]
\newtheorem{lemma}[thm]{Lemma}
\newtheorem{prop}[thm]{Proposition}
\newtheorem{cor}[thm]{Corollary}
\theoremstyle{definition}
\newtheorem{definition}{Definition}[section]
\newtheorem{example}{Example}[section]
\theoremstyle{remark}
\newtheorem{question}{Question}
\newcommand{\Z}{\mathbb{Z}}
\newcommand{\floor}[1]{\left\lfloor #1 \right\rfloor}
\newcommand{\Av}{{\operatorname{Av}}}
\newcommand{\SP}{{\operatorname{SP}}}
\newcommand{\Set}{{\operatorname{Set}}}
\newcommand{\phiaba}{\phi_{aba}}
\newcommand{\phisig}{\phi_{\sigma}}
\title[Deterministic stack-sorting for set partitions]{Deterministic stack-sorting for set partitions}
\author[Xia]{Janabel Xia}
\address{Department of Mathematics, Massachusetts Institute of Technology, Cambridge, MA}
\email{janabel@mit.edu}
\date{\today}
\begin{document}
\ytableausetup{centertableaux}
\allowdisplaybreaks

\begin{abstract}
    A \emph{sock sequence} is a sequence of elements, which we will refer to as \emph{socks}, from a finite alphabet. A sock sequence is \emph{sorted} if all occurrences of a sock appear consecutively. We define equivalence classes of sock sequences called \emph{sock patterns}, which are in bijection with set partitions. The notion of stack-sorting for set partitions was originally introduced by Defant and Kravitz. In this paper, we define a new deterministic stack-sorting map $\phi_{\sigma}$ for sock sequences that uses a $\sigma$-avoiding stack, where pattern containment need not be consecutive. When $\sigma = aba$, we show that our stack-sorting map sorts any sock sequence with $n$ distinct socks in at most $n$ iterations, and that this bound is tight for $n \geq 3$. We obtain a fine-grained enumeration of the number of sock patterns of length $n$ on $r$ distinct socks that are $1$-stack-sortable under $\phi_{aba}$, and we also obtain asymptotics for the number of sock patterns of length $n$ that are $1$-stack-sortable under $\phi_{aba}$. Finally, we show that for all unsorted sock patterns $\sigma \neq a\cdots a b a \cdots a$, the map $\phi_{\sigma}$ cannot eventually sort all sock sequences on any multiset $M$ unless every sock sequence on $M$ is already sorted.
\end{abstract}

\maketitle


\section{Introduction}\label{sec:intro}

\subsection{Stack-sorting algorithms}\label{subsec:stack-sorting}

There is a long history of research on sorting with restricted data structures. In particular, Knuth first introduced the concept of a \emph{stack-sorting machine} in \emph{The Art of Computer Programming} \cite{knuth_art_of_programming}, which uses a stack data structure to sort sequences. Given an input sequence, one can push elements from the input onto the stack and pop elements off the stack to the output subject to the constraint that the element being popped out of the stack must be the element that was most recently pushed onto the stack. Thus, the stack grants sorting power through choices between pushes and pops. A \emph{stack-sorting algorithm} is a rule for deciding whether to push or pop at each step in the sorting process. There has since been much work on various stack-sorting algorithms inspired by Knuth's stack-sorting machine, including West's deterministic stack-sorting algorithm \cite{bona_stack_sorting_survey, defant_phd_stack_sorting_and_beyond, defant_troupes, west_permutations_1990}, pop-stack-sorting \cite{atkinson_pop_stacks_parallel, avis_pop_stacks_series, smith_stack_pop_stack_series}, deterministic stack-sorting for words \cite{defant_stack_sorting_for_words}, sorting with pattern-avoiding stacks \cite{berlow_restricted_stacks, cerbai_pattern_avoiding_machines, cerbai_restricted_stacks, defant_zheng_pattern_avoiding_stacks}, stack-sorting Coxeter groups \cite{defant_pop_stack_coxeter, defant_stack_sorting_coxeter}, and more. 

In the setting where our sequences are permutations, we say that a sequence is \emph{sorted} when it is the identity permutation. Given a stack-sorting algorithm, it is natural to ask how many passes through the stack are needed to sort the input object. We say that our input object is \emph{$k$-stack-sortable} if there is some stack-sorting algorithm that sorts the input in at most $k$ iterations. We say that our input object is \emph{$k$-stack-sortable under $\phi$} if it can be sorted by at most $k$ iterations of a given stack-sorting algorithm $\phi$. Knuth \cite{knuth_art_of_programming} characterized the permutations that are $1$-stack-sortable as exactly the permutations that avoid the pattern $231$. This characterization launched the growing field of permutation patterns research that exists today. In general, the set of permutations that are $k$-stack-sortable is closed under pattern containment. However, given the nondeterminism of Knuth's stack-sorting machine, it is very difficult to characterize or enumerate such permutations when $k \geq 2$. For example, Pierrot and Rossin only recently proved that there is a polynomial time algorithm for deciding whether or not a given permutation is $2$-stack-sortable \cite{pierrot_2_stack_sorting_poly}. This motivated West to define a deterministic stack-sorting algorithm \cite{west_permutations_1990}, which uses $21$-avoiding stacks. West's stack-sorting map is easier to analyze. For example, all permutations of length $n$ are $(n-1)$-stack-sortable under West's stack-sorting map. Furthermore, we can explicitly characterize all permutations that require all $(n-1)$ iterations of West's stack-sorting map to become sorted.

Beyond the permutation setting, Defant and Kravitz recently generalized the notion of stack-sorting to set partitions \cite{defant_foot_sorting_2022}. In this paper, we continue the investigation of stack-sorting of set partitions. More generally, we define \emph{sock sequence} to be a sequence of elements, which we refer to as \emph{socks}, from a finite alphabet. We say a sock sequence is \emph{sorted} if all occurrences of a sock appear consecutively in the sequence. We also define equivalence classes on sock sequences called \emph{sock patterns}, which are in bijection with set partitions. We will study stack-sorting on sock sequences and sock patterns.

As in the permutation setting, one can characterize which sock patterns are $1$-stack-sortable. Defant and Kravitz showed that a sock pattern $p$ is $1$-stack-sortable if and only if there is a $231$-avoiding word representation of $p$ \cite{defant_foot_sorting_2022}. However, the nondeterminism of stack-sorting sock patterns makes analyzing the number of stacks needed to sort them difficult. This motivates us to look at deterministic stack-sorting algorithms, which we believe may shed light on the nondeterministic setting. Inspired by both West's $21$-avoiding stack-sorting map and Defant and Zheng's work on consecutive pattern-avoiding stacks \cite{defant_zheng_pattern_avoiding_stacks}, we define and study novel pattern-avoiding stack-sorting maps on sock sequences. Our notion of pattern avoidance is identical to Klazar's notion of pattern avoidance for set partitions \cite{klazar-patterns-i, klazar-patterns-ii}. However, our notion of pattern avoidance within the stack differs in that we do not only consider consecutive pattern avoidance. We formally define these pattern-avoiding stack-sorting algorithms in Section~\ref{sec:prelims}.

\subsection{Main results}\label{subsec:main-results}

We give special attention to one such pattern-avoiding stack-sorting map for sock sequences. We define a \emph{foot-sorting map}, denoted $\phiaba$, based on $aba$ pattern avoidance in the stack. The name foot-sorting comes from Defant and Kravitz's original paper on the topic \cite{defant_foot_sorting_2022}, in which feet are used in place of stacks. We show that every sock sequence eventually becomes sorted under a finite number of iterations of $\phiaba$. Moreover, any sorted sock sequence stays sorted after applying $\phiaba$. Thus our map $\phiaba$ is a noninvertible sorting operator. 

When looking at any noninvertible combinatorial dynamical system, it is natural to ask how many objects require a given number of iterations to reach a periodic point. For sorting operators in particular, it is natural to ask how many objects become sorted after applying a fixed number of sorting operations. For example, Defant enumerated the $3$-stack-sortable permutations \cite{defant_stack_sorting_coxeter, defant_3_stack_sortable, defant_phd_stack_sorting_and_beyond}, West enumerated the $(n-2)$- and $(n-3)$-stack-sortable permutations \cite{west_permutations_1990}, and Claesson, Dukes, and Steingrimsson enumerated the $(n-4)$-stack-sortable permutations \cite{claesson_nm4_stack_sortable}. 

Thus, we begin by taking an enumerative approach to studying $\phiaba$. We state and prove an enumeration of sock patterns of length $n$ that are $1$-stack-sortable under $\phiaba$, and we further extend this result to a refined enumeration that is parameterized by the number of distinct socks $r$ in the sock pattern. In particular, we have the following closed forms.

\begin{thm}\label{thm:aba-sortable-coarse-enum}
    Let $s(n)$ denote the number of sock patterns of length $n$ that are $1$-stack-sortable under $\phiaba$. Let $P(x) := \sum_{n=1}^{\infty} s(n)x^n$ be the corresponding generating function. Then we have
    \[
    P(x) = \frac{(-1+3x-3x^2) + \sqrt{1-6x+7x^2-2x^3+x^4}}{4(x^2-x)}.
    \]
\end{thm}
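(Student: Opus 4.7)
The plan is to show that the stated closed form for $P(x)$ is equivalent to a simple polynomial functional equation, to prove that functional equation combinatorially, and then to solve it. Straightforward algebraic manipulation (isolating and squaring the radical) shows that the closed form in the theorem is equivalent to the quadratic relation
\begin{equation*}
    P(x) = x(1-x)(2P(x)+1)(P(x)+1),
\end{equation*}
together with the initial condition $P(0)=0$. Hence the main work is to establish this identity.

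To establish the functional equation, I would begin with a structural analysis of $\phiaba$. Because the stack must remain $aba$-avoiding at every step, its contents at any moment form a concatenation of contiguous blocks, each labeled by a distinct sock; the algorithm reads the next input sock $s$ and either extends the top block (when $s$ is on top), pushes a fresh block on top (when $s$ is absent), or triggers a \emph{burst} of pops until $s$'s existing block is exposed (when $s$ is buried). The output is the time-ordered concatenation of all such pops, and sortedness of the output amounts to requiring that, for each sock, its pops land in consecutive positions of the output. From this description I would extract a recursive decomposition of $1$-stack-sortable sock patterns $\pi$, keyed on the first sock $a$: if $a$ appears only once then $a$ sits at the bottom of the stack throughout and the rest of $\pi$ is an independent $1$-stack-sortable pattern on disjoint letters (contributing $x(P+1)$), while if $a$ recurs then the first burst triggered by $a$ splits $\pi$ into an ``inner'' sub-pattern (of socks that sat above $a$ before being popped) and an ``outer'' sub-pattern (processed after $a$'s block is finally removed), each itself $1$-stack-sortable subject to a mild compatibility bit. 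Summing the contributions should collect into $x(1-x)(2P+1)(P+1)$, where $x$ accounts for the initial sock $a$, one factor $(P+1)$ accounts for one sub-pattern (allowing emptiness), $(2P+1)$ accounts for the other sub-pattern with a binary label encoding the compatibility choice, and $(1-x)$ adjusts for overlap between the two cases.

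The main obstacle will be identifying the correct splitting point and the correct bit of extra data so that the two decomposed subpatterns are genuinely independent $1$-stack-sortable sock patterns, and verifying that the decomposition and reconstruction are mutually inverse with matching weights; this is where the subtlety of $\phiaba$ (where, e.g., two separate stack-blocks of the same sock can still be popped into one output run) bites. Once the functional equation is in hand, the closed form follows by applying the quadratic formula to $2x(1-x)P^2 - (1-3x+3x^2)P + x(1-x) = 0$ and selecting the branch with $P(0)=0$, which is exactly the expression stated in the theorem.
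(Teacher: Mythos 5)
Your algebraic reduction is correct: the stated closed form is indeed the root with $P(0)=0$ of $2x(1-x)P^2-(1-3x+3x^2)P+x(1-x)=0$, which is the same quadratic the paper arrives at, and your description of how an $aba$-avoiding stack behaves (distinct-sock blocks, extend/push/burst) is accurate. The genuine gap is that the combinatorial heart of the argument --- establishing the functional equation --- is only sketched, and the specific decomposition you propose does not work as stated. If the first sock $a$ recurs, it in general separates the input into $m\geq 1$ blocks $p=a^{l_1}s_1a^{l_2}s_2\cdots a^{l_m}s_ma^{l_{m+1}}$ with each $s_i$ free of $a$; splitting only at ``the first burst triggered by $a$'' leaves an outer piece that still contains $a$ (whenever $a$ occupies three or more separated runs), so it is not an independent sub-pattern on disjoint letters, and $a$'s block is not ``finally removed'' at that burst --- all copies of $a$ exit only at the very end of the output. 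Moreover, the factorization $x(1-x)(2P+1)(P+1)$ does not admit the direct term-by-term reading you assign to it: subtracting the single-occurrence contribution $x(P+1)$ leaves $(P+1)x\left[2(1-x)P-x\right]$, whose expansion has negative intermediate terms, so ``$(1-x)$ adjusts for overlap'' cannot be a literal combinatorial weight without a signed or inclusion--exclusion argument that you have not supplied. You flag exactly this step as ``the main obstacle,'' which is an accurate self-assessment: it is the proof.

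For comparison, the paper handles this by first proving (Lemma~\ref{phi-aba-action}) that $\phiaba(a^{l_1}s_1\cdots a^{l_m}s_ma^{l_{m+1}})=\phiaba(s_1)\cdots\phiaba(s_m)a^{l_1+\cdots+l_{m+1}}$, then observing that sortedness of the output forces each $s_i$ to be $1$-stack-sortable, forces nonadjacent blocks to be sock-disjoint, and allows consecutive blocks $s_i,s_{i+1}$ to share at most one sock, whose identity (last sock of $\phiaba(s_i)$, first sock of $\phiaba(s_{i+1})$) is determined; this yields exactly $2^{m-1}$ gluings of $m$ independent sortable blocks and the relation
\[
P(x)=\frac{x}{1-x}+\sum_{m\geq 1}2^{m-1}P(x)^m\Bigl(\frac{x}{1-x}\Bigr)^m\frac{1}{1-x},
\]
whose geometric-series summation gives your quadratic. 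If you want to salvage a two-part recursion, you would need to recurse on ``$1$-sortable patterns whose leading sock is $a$'' while carrying the identity of the first non-$a$ block as extra data to decide the compatibility bit at each junction --- essentially re-deriving the $2^{m-1}$ count one junction at a time.
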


\begin{thm}\label{thm:fine-grained-enum}
Let $s(n,r)$ denote the number of sock patterns of length $n$ and containing $r$ distinct socks that are $1$-stack-sortable under $\phiaba$. Let $P_{[q]}(x) := \sum_{n = 1}^{\infty} \sum_{r = 1}^{\infty} s(n,r)x^n q^r$ be the corresponding multivariate generating function. Then we have
    \[
    P_{[q]}(x) = \frac{(-q+(q^2+2q)x-(q^2+2q)x^2) + q\sqrt{1-2(q+2)x+(q^2+2q+4)x^2-2q^2x^3 + q^2x^4}}{2(q+1)(x^2-x)}.
    \]   
\end{thm}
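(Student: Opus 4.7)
The plan is to combinatorially decompose the class of sock patterns that are $1$-stack-sortable under $\phi_{aba}$ and translate this decomposition into a quadratic functional equation for $P_{[q]}(x)$, tracking both the length and the number of distinct socks simultaneously.

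First I would recall the structural description of such patterns. During a pass of $\phi_{aba}$, the stack is always a concatenation of runs with pairwise distinct labels (an immediate consequence of $aba$-avoidance), and a sock sequence is $1$-stack-sortable exactly when the pass output places each sock's occurrences consecutively. This condition should translate into a recursive structure on word representatives. Next I would decompose a nonempty $1$-stack-sortable pattern $p$ at a canonical position, for instance at the \emph{last} occurrence of the first sock $a$ --- equivalently, at the moment the stack first empties during the pass. Write $p = u \cdot a \cdot v$, where $a$ does not appear in $v$ while $u$ may or may not contain further copies of $a$. Up to relabeling, each of $u$ and $v$ is itself a smaller $1$-stack-sortable pattern subject to a compatibility condition involving $a$, and careful tracking of sock counts through this decomposition should yield a quadratic of the form
\[
(q+1)\, x(1-x)\, P_{[q]}(x)^{2} + q\bigl((q+2)\, x(1-x) - 1\bigr)\, P_{[q]}(x) + q^{2}\, x(1-x) = 0.
\]
Here the factor $(q+1)$ in front of $P_{[q]}^{2}$ encodes a binary choice about whether the sub-patterns share a sock with their enclosing pattern, while the powers of $q$ in the other coefficients track how many distinct socks are introduced at each decomposition step. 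Solving by the quadratic formula and selecting the branch determined by $P_{[q]}(0) = 0$ and $[xq]\, P_{[q]} = s(1,1) = 1$ yields the closed form in the statement, and as a sanity check one verifies that specializing to $q=1$ recovers Theorem~\ref{thm:aba-sortable-coarse-enum}.

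The main obstacle will be pinning down the canonical decomposition and proving its bijectivity with the required $q$-weights. In particular, to produce precisely the coefficients $(q+1)$ and $(q+2)$ rather than simpler expressions, one must carefully account for when socks are shared between the pieces $u$ and $v$ (so a single factor of $q$ is counted once) versus when a new sock is introduced at the splitting point (so a fresh factor of $q$ appears). This sharing-versus-fresh bookkeeping is the heart of refining the coarse enumeration to the bivariate setting, and all of the delicate content of the theorem is concentrated in getting this accounting exactly right.
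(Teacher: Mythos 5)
Your target functional equation is correct: the quadratic
\[
(q+1)\,x(1-x)\,P_{[q]}(x)^{2} + q\bigl((q+2)\,x(1-x)-1\bigr)\,P_{[q]}(x) + q^{2}\,x(1-x) = 0
\]
is exactly equivalent to the stated closed form (and specializes at $q=1$ to Theorem~\ref{thm:aba-sortable-coarse-enum}), so the analytic half of your plan is fine. But there is a genuine gap: the combinatorial derivation of that quadratic is precisely the content of the theorem, and you have not carried it out --- you explicitly defer ``all of the delicate content'' to the bookkeeping step. Moreover, the specific decomposition you propose, $p = u\cdot a\cdot v$ at the last occurrence of the first sock $a$, is mischaracterized and does not close into a recursion of the stated shape. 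The stack does \emph{not} first empty at that point: by Lemma~\ref{phi-aba-action}, all copies of the first sock sit at the bottom of the stack and are the last items popped, so the stack first empties only at the end of the pass. And since $u$ still contains copies of $a$ interleaved with other blocks, $u$ is not a generic smaller $1$-sortable pattern up to relabeling; making your binary split work would require an auxiliary class (patterns ending in their first sock, say) and a system of equations you have not set up.

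The paper instead uses the $m$-ary decomposition $p = x^{l_1}s_1x^{l_2}s_2\cdots x^{l_m}s_mx^{l_{m+1}}$ with $x\notin s_i$, for which Lemma~\ref{phi-aba-action} gives $\phiaba(p)=\phiaba(s_1)\cdots\phiaba(s_m)x^{l_1+\cdots+l_{m+1}}$. Sortedness of the output forces each $s_i$ to be $1$-sortable and forces $|C(s_i)\cap C(s_{i+1})|\le 1$, with the shared sock (if any) pinned to be the last sock of $\phiaba(s_i)$ and the first of $\phiaba(s_{i+1})$; this is what produces exactly the weight $(1+q^{-1})^{m-1}$ per configuration of junctions, and summing the resulting geometric series in $m$ yields the quadratic. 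Your heuristic that $(q+1)$ encodes a share-or-not choice is the right intuition, but without the lemma on the action of $\phiaba$ and the precise ``at most one shared sock, in a forced position'' claim, the coefficients $(q+1)$ and $(q+2)$ are guesses fitted to the answer rather than consequences of a proved bijection.
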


We then take a separate approach to studying these pattern-avoiding stack-sorting operators, namely through investigating periodic points for general patterns $\sigma$. In particular, we show in Proposition~\ref{prop:unsortable-for-almost-all-sigma} that for any unsorted sock pattern $\sigma \neq a\cdots aba\cdots a$ and for any multiset of socks $M$ such that not all sock sequences on $M$ are already sorted, there always exists a sock sequence on $M$ that never gets sorted by $\phisig$. 


\subsection{Outline of the paper}\label{subsec:outline}

In Section~\ref{sec:prelims}, we provide background discussion on sock sequences and stack-sorting algorithms that we use throughout the paper. In Section~\ref{sec:foot-sorting-map}, we describe the action of our foot-sorting map $\phiaba$ and prove a tight upper bound on the number of iterations of $\phiaba$ required to sort an arbitrary sock sequence. In Section~\ref{sec:enum}, we prove the explicit enumerations of sock patterns that are $1$-stack-sortable under $\phiaba$ stated in Theorem~\ref{thm:aba-sortable-coarse-enum} and Theorem~\ref{thm:fine-grained-enum}. In Section~\ref{sec:sigma-av-stacks}, we study the behavior of general $\sigma$-avoiding stack-sorting maps through periodic points.  In Section~\ref{sec:further-directions}, we raise some natural future directions for the investigation of deterministic stack-sorting for set partitions. 


\section{Preliminaries}\label{sec:prelims}

In this section, we provide definitions and notation for the discussion of stack-sorting sock sequences.

\subsection{General notation for sock sequences}

Fix an arbitrary infinite alphabet $A$. Throughout, we will use the standard Latin alphabet $a, b, c, \dots$ to refer to distinct elements of $A$.  We call these elements of $A$ \emph{socks}. A \emph{sock sequence} is a sequence $p = p_1 \cdots p_n$ of socks in $A$. Let $A^*$ denote the (infinite) set of sock sequences on $A$.

Let $p$ be a sock sequence. Let $|p|$ denote the length of the sock sequence. We say that a sock sequence $p$ is \emph{empty} if $|p| = 0$, and \emph{nonempty} otherwise. Let $x_1x_2 \cdots$ denote the concatenation of sock sequences $x_1, x_2, \dots $ (where $x_i$ is potentially empty). Let $x^m$ denote the sock sequence $x \cdots x$ consisting of $m$ occurrences of sock $x \in A$. Let $C(p) := |\SP(p)|$ denote the number of distinct socks appearing in sock sequence $p$. 

\subsection{Sock patterns as sequences}

We first introduce the following definition, which will help us develop the notion of sock patterns.

\begin{definition}
    Let $p = p_1 \cdots p_n$ be a sock sequence on the alphabet $A$. Then define $\Set(p) := \{p_1, \dots, p_n\}$ as the set of distinct socks in $p$. 
\end{definition}

We can now define an equivalence relation on sock sequences, where two sock sequences $p = p_1 \cdots p_n$ and $q = q_1 \cdots q_m$ are equivalent if and only if there exists a bijection $f: \Set(p) \rightarrow \Set(q)$ such that $q = f(p_1) \cdots f(p_n)$. Thus, two sock sequences are equivalent if one can be obtained from the other by renaming the socks. For example, the sock sequences $abaa$ and $cacc$ are equivalent. A \emph{sock pattern} is an equivalence class on sock sequences under our equivalence relation. This also gives us a notion of pattern avoidance. We say that a sock sequence \emph{avoids} a sock pattern $\sigma$ if no subsequence of socks forms a sock sequence in the class $\sigma$.

We can also introduce the following notion of standardization to obtain a natural representative of each equivalence class.

\begin{definition}\label{def:standardization}
    The \emph{standardization} of a sock sequence $p$ is an injective renaming of the socks in $p$ such that the socks appearing for the first time from left to right form the alphabetical sequence $\{a,b,c,\dots\}$.
\end{definition}

\begin{example}\label{ex:standardization}
    Let $p = bbadb$. Then the standardization of $p$ is $aabca$.
\end{example}

We say that a sock sequence is \textit{standardized} if it is equal to its own standardization. We see then that two sock sequences are equivalent exactly when they have the same standardization. When referring to a sock pattern, we will use the unique standardized sock sequence in the equivalence class as our representative.

In the traditional setting of stack-sorting permutations, one can think of permutations as sequences on the base set $[n]$ for some $n \in \Z_{>0}$. In the setting of sock sequences, we instead define our base set to be a multiset $M$ of socks. We can then think of sock sequences as sequences on the multiset $M$. 

\begin{definition}\label{def:sock-orders-on-multiset}
    Let $M$ be a multiset of socks. Let $S(M)$ be the set of sock sequences consisting of all socks in $M$ (equivalently, the set of sock sequences on $M$). 
\end{definition}

\begin{example}\label{ex:sock-orders-on-multiset}
    Let $M = \{a,a,b,b\}$. Then $S(M) = \{aabb, abab, abba, baab, baba, bbaa\}$.
\end{example}

Note that it will often be natural to consider the entire set $S(M)$ at once, as $S(M)$ is closed under any stack-sorting map. For example, to study a stack-sorting map $\phi$, we can ask which sock sequences within $S(M)$ get sent to each other, which sock sequences within $S(M)$ eventually become sorted under repeated iterations of $\phi$, which sock sequences within $S(M)$ are periodic points, etc. 

Finally, the following notation will help us connect sock sequences to set partitions.

\begin{definition}
    Let $p$ be any sock sequence on $M$. For any sock $m \in M$, define $I(p,m) := \{i \mid p_i = m\}$, i.e.\ the set of indices $i$ at which sock $m$ occurs in $p$.
\end{definition}

\subsection{Sock patterns as set partitions}

Sock patterns of length $n$ are in bijection with set partitions on $[n]$. For example, the sock pattern represented by the standardized sock sequence $p = aabacb$ corresponds to the set partition $\{\{1,2,4\},\{3,6\},\{5\}\}$. Let $\mathcal{X_n}$ denote the set of all set partitions on $[n]$. Then the bijection is 
\begin{align*}
    \SP(p): A^* &\rightarrow \mathcal{X} \\
    p &\mapsto \{I(p,m) \mid m \in p\},
\end{align*} 
where the subsets in the set partition $\SP(p)$ correspond to the indices at which a fixed sock occurs in $p$. Note that Klazar's notion of pattern avoidance for set partitions \cite{klazar-patterns-i, klazar-patterns-ii} corresponds to our notion of pattern avoidance for sock patterns. 


\subsection{A novel stack-sorting map}

We define a deterministic stack-sorting algorithm for sock sequences based on pattern avoidance of sock patterns. This sorting algorithm is inspired by the consecutive pattern-avoiding stacks defined by Defant and Zheng \cite{defant_zheng_pattern_avoiding_stacks}. Here, we consider pattern containment in a more general sense, where a pattern need not occur consecutively within the stack.

\begin{definition}\label{def:phi_aba}
    Let $\phiaba: A^* \rightarrow A^*$ denote the stack-sorting map given by the following procedure. Suppose we are given an input sequence $p$ read from left to right. Throughout the procedure, we consider the sequence $s$ of socks obtained from reading the stack from top to bottom. At each point in time, we do one of the following operations:
    \begin{enumerate}
        \item If there are remaining socks to push and pushing the leftmost sock from our input sock sequence onto the top of the stack does not create an $aba$ pattern in $s$, then we do so.
        \item Otherwise, we pop the topmost sock off of our stack.
    \end{enumerate}
    The procedure continues until the output contains all socks in the original input. Let $\phiaba(p)$ denote the output sock sequence. 
\end{definition}

\begin{example}
    Let $p = abcab$. Then the following diagram illustrates the state of the stack at each step in the algorithm. Our output sock sequence is $\phiaba(abcab) = cbbaa$, which we note is sorted.

    \begin{figure}[h!]
        \centering
        \includegraphics[width=160mm]{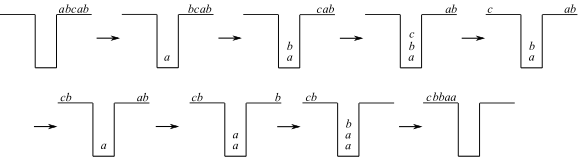}
        \caption{Result of applying map $\phiaba$ to the sock sequence $p = abcab$}
        \label{fig:enter-label}
    \end{figure}
    
\end{example}

 We can think about output sock sequences as belonging to the image of map $\phiaba$. Note that this sorting procedure preserves the multiset of socks, and therefore also induces a map $\phiaba:~S(M) \rightarrow S(M)$ for any multiset of socks $M$. Furthermore, sorting with $\phiaba$ and then renaming socks with a bijection $f$ yields the same sock sequence as renaming socks with a bijection $f$ and then sorting with $\phiaba$, so this sorting procedure also induces a map from sock patterns to sock patterns. We call this particular stack-sorting map a \emph{foot-sorting} map, which we investigate in more detail in Section~\ref{sec:foot-sorting-map}.

More generally, we can define analogous stack-sorting maps $\phisig$ for any pattern $\sigma$.

\begin{definition}\label{def:phi_sigma}
    Let $\phisig: A^* \rightarrow A^*$ denote the stack-sorting map given by the following procedure. Suppose we are given an input sequence $p$ read from left to right. Throughout the procedure, we consider the sequence $s$ of socks obtained from reading the stack from top to bottom. At each point in time, we do one of the following operations:
    \begin{enumerate}
        \item If there are remaining socks to push and pushing the leftmost sock in our input sequence onto the top of the stack does not create a $\sigma$ pattern in $s$, then we do so.
        \item Otherwise, we pop the topmost sock off of our stack.
    \end{enumerate}
    The procedure continues until the output contains all socks in the original input. Let $\phisig(p)$ denote the output sock sequence. 
\end{definition}

Again, this general sorting procedure preserves the sets of sock sequences $S(M)$ for any multiset of socks $M$ and also induces a map from sock patterns to sock patterns.

\section{The foot-sorting map $\phiaba$}\label{sec:foot-sorting-map}

In this section, we study the behavior of the foot-sorting map $\phiaba$ introduced in Definition~\ref{def:phi_aba}. We begin by presenting a quick but useful lemma that tells us what the action of our foot-sorting map $\phiaba$ looks like recursively. 

\begin{lemma}\label{phi-aba-action}
    Let $p = x^{l_1} s_1 x^{l_2} s_2 \cdots x^{l_m} s_m x^{l_{m+1}}$ be a sock sequence where $l_1, \dots, l_{m} > 0$ and $l_{m+1} \geq 0$ are integers, and $s_1, \dots, s_m$ are nonempty sock sequences not containing $x \in A$. Then
    \[
    \phiaba(p) = \phiaba(s_1)\phiaba(s_2) \cdots \phiaba(s_m)x^{l_1 + \cdots + l_{m+1}}.
    \]
\end{lemma}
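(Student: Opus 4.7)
The plan is to trace through the algorithm on $p$ explicitly, exploiting two structural properties of the stack during the run. First, the block of $x$'s that accumulates at the bottom of the stack never interferes with the $aba$-avoidance check when pushing a non-$x$ sock on top. Second, once even one $x$ sits in the stack, any non-$x$ sock above it blocks all subsequent pushes of $x$.

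Concretely, I would first prove two short claims about stacks of the form $[t, x^k]$ (read top to bottom), where $t$ is a (possibly empty) word avoiding $x$. The first says that pushing a non-$x$ sock $y$ produces an $aba$ pattern in $[y, t, x^k]$ if and only if it produces one in $[y, t]$; the point is that any $aba$ pattern in the combined stack either lies entirely in the upper portion, or else has its two equal outer positions both in the $x$-block, which forces the middle position to be $x$ as well and hence rules out $aba$. The second says that if $t$ is nonempty and $k \geq 1$, then pushing an $x$ always creates an $aba$ pattern, witnessed by the new top $x$, any sock of $t$, and any $x$ already in the stack. Both are quick case analyses on the positions of the three witnessing indices.

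With these in hand, I would trace the algorithm on $p$. Pushing $x^{l_1}$ produces a stack of all $x$'s. When processing $s_1$, the first claim guarantees that every push/pop decision matches the one the algorithm would make on $s_1$ starting from an empty stack, so the intermediate output and the residual non-$x$ stack contents $t_1$ at the end of $s_1$ agree with the standalone run of $\phiaba$ on $s_1$. When the next input sock (the first $x$ of $x^{l_2}$) is queued, the second claim forbids pushing until all of $t_1$ is popped; the popped socks, combined with the intermediate pops during $s_1$, constitute exactly $\phiaba(s_1)$. Then $x^{l_2}$ merges into the bottom $x$-block, restoring the invariant that the stack is a single block of $x$'s. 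Iterating this argument through $s_2, \ldots, s_m$, then emptying the final stack $x^{l_1 + \cdots + l_{m+1}}$ after the input is exhausted, yields the claimed decomposition. For a clean write-up I would formalize this as induction on $m$, peeling off $x^{l_1} s_1$ at the inductive step and applying the invariant that the stack is a pure $x$-block whenever the next unread input is an $x$.

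The main obstacle is the first claim: one must be meticulous that no $aba$ pattern in the combined stack straddles the non-$x$ and $x$ portions, so that the dynamics on the upper region truly decouple from the $x$'s below. Once that subtlety is handled, the rest of the argument is bookkeeping on when the pops occur, and the recursive form of the lemma falls out naturally.
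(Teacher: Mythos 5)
Your proposal is correct and follows essentially the same route as the paper's proof: a direct trace of the algorithm in which the bottom block of $x$'s is shown not to affect the $aba$-check for non-$x$ pushes (so each $s_i$ is sorted independently), and a queued $x$ forces the residual non-$x$ stack contents to be popped first. Your two explicit claims about stacks of the form $[t,x^k]$ just make rigorous what the paper asserts informally, so this is the same argument written more carefully.
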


\begin{proof}
    We show directly what happens to input sock sequence $p = x^{l_1} s_1 x^{l_2} s_2 \cdots x^{l_m} s_m x^{l_{m+1}}$. By definition, we first push the first $l_1$ occurrences of $x$ onto the stack. We then have the following for all $i \in [m-1]$. We i) push all socks in $s_i$ onto the stack, which only contains $l_1 + \cdots + l_i$ occurrences of $x$. This results in $s_i$ getting sorted by $\phiaba$ independently, since $s_i$ contains no occurrences of $x$ and therefore does not interact with the occurrences of $x$ at the bottom of the stack. We then ii) must pop all remaining socks in $s_i$ in the stack before pushing the next occurrences of $x$, to avoid having the pattern $aba$ occur in the stack. Finally, we iii) push the next $l_{i+1}$ consecutive occurrences of $x$ in our input. After $m-1$ rounds of this process, we are left with $s_m x^{l_{m+1}}$ in our input, $\phiaba(s_1)\cdots \phiaba(s_{m-1})$ in our output, and exactly $l_1 + \cdots + l_m$ occurrences of $x$ on our stack. Again, we push all socks in $s_m$ and independently sort them. If $l_{m+1} = 0$, we finally pop all remaining socks in $s_m$ in the stack and all $l_1 + \cdots + l_m$ occurrences of $x$ at the bottom of the stack, giving us $\phiaba(s_1) \cdots \phiaba(s_m) x^{l_1 + \cdots + l_m}$ as our final output. If $l_{m+1} > 0$, we pop all remaining socks in $s_m$ in the stack before pushing the next $l_{m+1}$ occurrences of $x$, and finally pop all $l_1 + \cdots + l_{m+1}$ occurrences of $x$ in the stack to obtain $\phiaba(s_1)\phiaba(s_2) \cdots \phiaba(s_m) x^{l_1 + \cdots l_{m+1}}$. Thus our output always becomes $\phiaba(s_1)\phiaba(s_2) \cdots \phiaba(s_m) x^{l_1 + \cdots + l_{m+1}}$, as desired.
\end{proof}

Given how our foot-sorting map $\phiaba$ acts on sock sequences, we can provide an upper bound on the number of iterations of $\phiaba$ required to sort any sock sequence. In the permutation setting, we have that any permutation of length $n$ is $(n-1)$-sortable under West's stack-sorting map, which is linear in the number of distinct elements of the permutation sequence, and furthermore this bound is tight. It turns out that any sock sequence on $n$ distinct socks is $n$-sortable under $\phiaba$, which is also linear in the number of distinct elements appearing in our sequence. We also show that this bound is tight.

\begin{thm}\label{n-$1$-stack-sortable}
    Let $p$ be any sock sequence on multiset $M$. Let $n$ be the number of distinct socks in $M$. Then $p$ is $n$-sortable under $\phiaba$. Furthermore, for any $n > 2$ there exists a sock sequence $p$ such that $p$ is not $(n-1)$-sortable under $\phiaba$.
\end{thm}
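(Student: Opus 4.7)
My plan is to prove the upper bound by induction on $n$ using Lemma~\ref{phi-aba-action}, and to prove tightness by exhibiting an explicit family of examples.

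For the upper bound, the base case $n = 1$ is immediate since any sock sequence on a single sock is already sorted. For the inductive step, given $p$ with $n$ distinct socks, I would set $x = p_1$ and apply Lemma~\ref{phi-aba-action} to write $\phiaba(p) = q \cdot x^L$, where $q = \phiaba(s_1) \cdots \phiaba(s_m)$ contains no $x$ and hence has at most $n-1$ distinct socks. The key intermediate step I would need is a block-preservation claim: if $r = r_1 \cdot x^L \cdot r_2$ where $r_1, r_2$ contain no $x$, then $\phiaba(r)$ again has the $L$ copies of $x$ forming a single consecutive block, and deleting that block from $\phiaba(r)$ recovers exactly $\phiaba(r_1 r_2)$. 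Iterating this claim, the non-$x$ subsequence of $\phiaba^{n-1}(q \cdot x^L)$ equals $\phiaba^{n-1}(q)$, which is sorted by the inductive hypothesis applied to $q$. Re-inserting a single $x^L$ block into a sorted $x$-free sequence keeps it sorted, so $\phiaba^n(p)$ is sorted.

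For tightness with $n \geq 3$, I would use the explicit family $p_n = a_1 a_2 \cdots a_n a_1 a_2 \cdots a_n$. Iterated direct application of Lemma~\ref{phi-aba-action} yields closed-form expressions for each iterate (for instance $\phiaba(p_n) = a_n \cdots a_2 a_n \cdots a_2 a_1^2$ and $\phiaba^2(p_n) = a_2 \cdots a_{n-1} a_1^2 a_2 \cdots a_{n-1} a_n^2$), and an induction on $k$ then shows that $\phiaba^k(p_n)$ always contains at least one sock split across two non-adjacent blocks whenever $k < n$. Hence $p_n$ is not $(n-1)$-sortable.

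The main obstacle is the block-preservation claim. The subtlety is that the step-by-step executions of $\phiaba$ on $r$ and on its non-$x$ subsequence $\hat{r}$ can actually diverge in their push/pop decisions: an $x$-block sitting in the middle of the stack during the $r$-execution can create an $aba$ pattern with a newly pushed non-$x$ sock, forcing the $x$'s to be popped out, whereas no such obstruction appears in the $\hat{r}$-execution. The claim nonetheless holds because the $x$'s in the stack always form a contiguous sub-block, so whenever such a forced pop is triggered the entire $x$-block exits into the output together; moreover, outside of these moments the non-$x$ portions of the two stacks and outputs stay synchronized. I plan to establish the claim via a careful parallel simulation of the two executions, matching up the non-$x$ stack contents at corresponding times.
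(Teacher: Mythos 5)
Your tightness argument (the doubled sequence $a_1\cdots a_n a_1 \cdots a_n$ and an induction on the iterates via Lemma~\ref{phi-aba-action}) is essentially the paper's own proof of the lower bound. The problem is in your upper bound.

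The gap is the final step: ``re-inserting a single $x^L$ block into a sorted $x$-free sequence keeps it sorted.'' This is false unless you also control \emph{where} the block lands, and your block-preservation claim gives no such control: the $x$-block can be deposited strictly inside another sock's run. Concretely, take $q = cycy$ and $L=2$, so $qx^L = cycyxx$. One checks $\phiaba(cycy) = yycc$, which is sorted, and the $x$'s stay together, yet $\phiaba(cycyxx) = yxxycc$: the $x$-block splits the $y$-run, and the result is \emph{not} sorted even though its $x$-free reduction is. (This arises from your own setup: $p = xycxyc$ has $\phiaba(p) = cycyxx$.) So ``$\phiaba^{k}(q)$ sorted and $x$'s clumped'' does not imply ``$\phiaba^{k}(qx^L)$ sorted,'' and your induction as written only yields $(n+1)$-sortability (one extra pass to re-clump the split sock). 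Since the theorem is true, the block must in fact land at a run boundary at the final step, but nothing in your argument establishes this, and the intermediate iterates of your own scheme exhibit exactly the failure mode. Separately, your block-preservation claim, while I believe it is true, is a substantial lemma whose proof you have only sketched; note also that Lemma~\ref{phi-aba-action} as stated requires the sequence to \emph{begin} with $x$, so it does not directly apply to $r_1 x^L r_2$.

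The paper avoids all of this with a simpler potential argument: call a sock \emph{clumped} if its occurrences are consecutive; then (i) clumped socks stay clumped under $\phiaba$ (the stack always avoids $aba$, so each sock occupies a contiguous stack segment and is pushed and popped as a unit), and (ii) each application of $\phiaba$ clumps the first not-yet-clumped sock, which follows directly from Lemma~\ref{phi-aba-action} applied with $x$ equal to that sock. Since the number of clumped socks strictly increases until all $n$ are clumped, $n$ passes suffice. I would recommend replacing your reduction argument with this monotone-statistic argument, or else supplying a proof that at the $(n-1)$-st iterate the $x$-block cannot interrupt another sock's run.
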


\begin{proof}
Say that a sock is \emph{clumped} if all occurrences of that sock appear consecutively. Then we can see that any sock that is clumped stays clumped under $\phiaba$; if we can push one of the clumped socks onto the stack and avoid the pattern $aba$, then we can push all of them onto the stack. Likewise, once we must pop a clumped sock out of the stack, then all of the socks can constitute the last sock in the pattern $aba$, so they must all be popped out together. 

We claim that each application of $\phi$ adds at least one more clumped sock. In particular, if the first sock $x$ is not clumped, then by Lemma~\ref{phi-aba-action}, one application of $\phiaba$ clumps together all occurrences of sock $x$ and sends them to the back of the output sock sequence. If the first sock $x$ is already clumped, then we have $p = x \cdots x q$ where $q$ is some sock sequence. Then $\phiaba(p) = \phi(q) x \cdots x$, and we can use the same argument on $q$. Thus, the first sock that is not already clumped becomes clumped under $\phiaba$. Because socks that are clumped stay clumped in any subsequent application of $\phiaba$, we need at most $n$ stacks to sort $p$.

To show that this bound is tight for any $n$, consider the sock sequence $p^* = a_1 \cdots a_n a_1 \cdots a_n$ where $a_i$ is a sock for all $i \in [n]$ and $p^*$ contains exactly two copies of each distinct sock $a_i$. We will show that $p^*$ is not $(n-1)$-sortable under $\phiaba$. To do so, we claim that the after $k$ iterations of $\phiaba$ for, our sock sequence becomes $X_1 q X_2 q X_3$ where $q$ is a sock sequence of length $|q| = n-k$ containing only single occurrences of socks, and $X_1, X_2$, and $X_3$ are sock sequences of lengths $|X_1| = 2\floor{\frac{k}{3}}, |X_2| = 2\floor{\frac{k+1}{3}}$ and $|X_3| = 2\floor{\frac{k+2}{3}}$. We prove this by induction on $k$.

Our base case is easily verifiable; $\phiaba((a_1 \cdots a_n)^2) = a_n \cdots a_2 a_n \cdots a_2 a_1 a_1)$, which we can rewrite in the form $X_1 q X_2 q X_3$ where $X_1$ and $X_2$ are empty, $X_3 = a_1a_1$, and $q = a_n \cdots a_2$, which are all of the correct form. Now we assume our hypothesis holds up to iteration $k$, and show that it remains true after iteration $k+1$. Indeed, if we write $q = aq'$, we have
\[
\phiaba(X_1 aq' X_2 aq' X_3) = (\overline{X_2} \ \overline{q'} \ \overline{X_3} \ \overline{q'} \ aa \ \overline{X_1}),
\]
which we can rewrite as $X_1' q' X_2' q' X_3'$, where $X_1' = \overline{X_2}$, $X_2' = \overline{X_3}$, and $X_3' = aa \overline{X_1}$. Then we can verify that $|X_1'| = |X_2| = 2\floor{\frac{k+1}{2}}$, $|X_2'| = |X_3| = 2\floor{\frac{k+2}{2}}$, $|X_3'| = 2 + 2\floor{\frac{k}{3}} = 2\floor{\frac{k+3}{3}}$, and $q = n-k-1$, as desired.

Given the claim, we now have that $X_2$ is nonempty for all sufficiently large $k$. The $X_1 q X_2 q X_3$ cannot be sorted until $q$ is empty, which takes at least $n$ iterations as desired.
\end{proof}

Note that this notion of nonconsecutive pattern avoidance for our deterministic stack-sorting maps $\phiaba$ (and more generally $\phisig$) is necessary for our maps to have nice properties like the above. Suppose, instead we defined an analogous stack-sorting map $\phi'_{aba}$ (and more generally $\phi'_{\sigma})$ that only considers consecutive pattern avoidance in the procedure given by Definition~\ref{def:phi_sigma}. Then one example of a sock sequence that never gets sorted by applying iterations of $\phi'_{aba}$ would be $abcabc$ (in fact, $\phi'_{aba}(abcabc) = cbacba$ and $\phi'_{aba}(cbacba) = abcabc$). Moreover, for any $\sigma$, we have sock sequences that never get sorted by applying iterations of $\phi'_{\sigma}$. Indeed, we will see in Proposition~\ref{prop:unsortable-for-almost-all-sigma} that for any pattern $\sigma \neq aba$, there always exists an unsorted sock sequence $p$ that avoids $\sigma$ and therefore alternates between $p$ and $\overline{p}$ under iterations of $\phi'_{\sigma}$, neither of which are sorted. Therefore, under the consecutive pattern avoidance regime, there would be no stack-sorting map $\phisig$ that eventually sorts any sock sequence.

\section{Enumerating $1$-stack-sortable sock patterns under $\phiaba$}\label{sec:enum}

In this section, we prove our main enumerative results. We first provide a proof of Theorem~\ref{thm:aba-sortable-coarse-enum} and then derive asymptotics based on the closed form. Then we generalize our argument to prove Theorem~\ref{thm:fine-grained-enum}. 

We first discuss the notion of appending two sock patterns. Observe that under our notion of equivalence between sock sequences, even if sock sequences $s_1$ and $s_1'$ are equivalent and sock sequences $s_2$ and $s_2'$ are equivalent, it does not necessarily follow that the sock sequence $s_1s_2$ is equivalent to the sock sequence $s_1's_2'$. For example, suppose $s_1 = aaba, s_1' = aaba, s_2 = bcab$, and $s_2' = cbac$. Then we can have $s_1s_2 = aababcab$ and $s_1's_2' = aabacbac$, which are not the same sock pattern. The sock pattern $p$ is determined by how we explicitly assign sock names to the occurrence of $s_1$ and the occurrence of $s_2$, up to standardization. 

We are now ready to prove Theorem~\ref{thm:aba-sortable-coarse-enum}.

\begin{proof}[Proof of Theorem~\ref{thm:aba-sortable-coarse-enum}]    

    We use Lemma~\ref{phi-aba-action}.  Let $p = x^{l_1} s_1 x^{l_2} s_2 \cdots x^{l_m} s_m x^{l_{m+1}}$ be a $1$-stack-sortable sock pattern under $\phiaba$ where $x \notin s_i$ for any $i \in [m]$. Note that in order for
    \[\phiaba(p) = \phiaba(s_1)\phiaba(s_2) \cdots \phiaba(s_m)x^{l_1 + \cdots + l_{m+1}}\]
    to be sorted, we must have that for all $i \in [m]$, each $s_i$ individually is a $1$-stack-sortable sock pattern under $\phiaba$. 
    
    Then given any fixed $s_i$, we only need to consider their relationship to each other in the full sock pattern. In particular, we claim that for any $l_1, l_2 > 0$ and nonempty sock patterns $s_1, s_2$ that are each $1$-stack-sortable under $\phiaba$, there are exactly two possible sock patterns $x^{l_1} s_1 x^{l_2} s_2$ that are $1$-stack-sortable under $\phiaba$. To see this, we must have $|C(s_i) \cap C(s_{i+1})| \leq 1$, as otherwise the subsequence $\phiaba(s_i) \phiaba(s_{i+1})$ in $\phiaba(p)$ contains either $abab$ or $abba$ as a subsequence and is not sorted. We then consider the following two cases:

    \begin{enumerate}[i)]
        \item If $|C(s_1) \cap C(s_{2})| = 0$, then there is exactly one way to assign sock names to $s_1$ and $s_2$ up to standardization. Namely, we can assign any sock names such that the socks in $s_1$ and $s_2$ are disjoint, and then standardize.

        \item If $|C(s_1) \cap C(s_{2})| = 1$, then again there is exactly one way to assign sock names to $s_1$ and $s_2$ up to standardization. Namely, we assign the same name to the last element that appears in $\phiaba$ and to the first element that appears in $\phiaba(s_2)$, and consider all other socks in $s_1$ and $s_2$ disjoint. 
    \end{enumerate}

    Then we can see that up to equivalence, $p$ is determined by the sock patterns $s_i$ together with the choice for $1 \leq i \leq m-1$ of whether or not $s_i$ and $s_{i+1}$ share a sock, for a total of $2^{m-1}$ choices. We can now consider the cases where $m=0$ and $m \geq 1$ separately, which allows us to write the following relation on $P(x)$:

    \begin{align}
        P(x) &= \frac{x}{1-x} + \sum_{m \geq 1} \left(P(x)^m 2^{m-1} \left(\sum_{\substack{l_1, \dots, l_m > 0 \\ l_{m+1} \geq 0}} x^{l_1 + \cdots + l_{m+1}}\right)\right) \label{eq:eqn-in-p}\\
        &= \frac{x}{1-x} + \sum_{m \geq 1} \left(P(x)^m 2^{m-1} \left(\frac{x}{1-x}\right)^m \left(\frac{1}{1-x}\right)\right) \\
        &= \frac{x}{1-x} + \frac{1}{2(1-x)}\sum_{m \geq 1} \left(P(x)^m 2^{m} \left(\frac{x}{1-x}\right)^m\right) \\
        &= \frac{x}{1-x} + \frac{1}{2(1-x)} \cdot
        \frac{2P(x) \left(\frac{x}{1-x}\right)}{1-2P(x)\left(\frac{x}{1-x}\right)}. \label{eq:nice-poly-in-p}
    \end{align}

    To see why Equation~\ref{eq:eqn-in-p} holds, we can consider the coefficient of $x^n$ for any $n \in \Z_{\geq 0}$. On the left-hand side, this is $s(n)$ by definition. On the right-hand side, we have two main terms. The first term $\frac{x}{1-x} = \sum_{n \geq 1} 1x^n$ counts the unique sock pattern with length $n$ that is $1$-stack-sortable under $\phiaba$ when $m=0$, namely $p = x \cdots x$. The second term contributes a sum of $2^{m-1} s(n_1) s(n_2) \cdots s(n_m)$ over all possible lengths $n_1, \dots, n_m, l_1, \dots, l_{m+1}$ with $n_i > 0$ for $i\in[m]$ and $l_i > 0$ such that $n_1 + \cdots + n_m + l_1 + \cdots + l_m = n$. This counts the number of sock patterns that are $1$-stack-sortable under $\phiaba$ when $m \geq 1$, since we can take any nonempty sock patterns $s_i$ that are independently $1$-stack-sortable under $\phiaba$ and append them in $2^{m-1}$ ways. 
    
    Given Equation~\ref{eq:nice-poly-in-p}, one can now easily solve the quadratic in $P(x)$ to obtain the closed form
    \[
    P(x) = \frac{(-1+3x-3x^2) + \sqrt{1-6x+7x^2-2x^3+x^4}}{4(x^2-x)},
    \]
    as desired.    
\end{proof}


\begin{cor}\label{cor:1-sortable-asymptotics}
    We have $s(n) = K c^n n^{-3/2} + O(c^n n^{-5/2})$, where $K = 0.34313 \cdots$ is a constant and $c = 4.5464 \cdots$ is the inverse of the smallest positive root $x_0$ of $1-6x+7x^2-2x^3+x^4 = 0$.
\end{cor}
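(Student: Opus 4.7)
The plan is to apply standard singularity analysis (the Flajolet–Odlyzko transfer theorem) to the closed form
\[
P(x) = \frac{(-1+3x-3x^2) + \sqrt{q(x)}}{4(x^2-x)}, \qquad q(x) := 1-6x+7x^2-2x^3+x^4,
\]
obtained in Theorem~\ref{thm:aba-sortable-coarse-enum}. Since $P$ is algebraic with a square-root dominant singularity, this is a textbook situation: locate the singularity, compute the local Puiseux expansion, and invoke the transfer theorem.

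First, I would locate the dominant singularity. The candidate singularities are zeros of the radicand $q$ and of the denominator $4x(x-1)$. A direct expansion $\sqrt{q(x)} = 1 - 3x - x^2 + O(x^3)$ shows the numerator of $P$ vanishes to order $x^2$ at $x = 0$, so $P$ is analytic there; an analogous cancellation (double zero of the numerator against a simple zero of the denominator, which follows from $q(1) = 1$ and $q'(1) = 6$) occurs at $x = 1$. Hence the singularities of $P$ lie among the zeros of $q$. Since $q(0) = q(1) = 1$ and $q(1/2) = -7/16$, the polynomial $q$ has two real zeros in $(0,1)$; let $x_0 \in (0, 1/2)$ denote the smaller, so that $x_0^{-1} \approx 4.5464$ numerically. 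To see that $x_0$ is the unique singularity on its circle of convergence: the discriminant of $q''(x) = 12x^2 - 12x + 14$ is negative, so $q'$ is strictly monotone and $q$ has exactly two real roots ($x_0$ and some $\alpha_2 \in (1/2, 1)$), with the remaining complex conjugate pair $\beta, \bar\beta$ satisfying $x_0\, \alpha_2\, |\beta|^2 = 1$ by Vieta, hence $|\beta|^2 > 2 > x_0^2$.

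Next I would compute the Puiseux expansion at $x_0$. Since $q$ has a simple zero at $x_0$, the factorization $q(x) = -x_0 q'(x_0)(1 - x/x_0) + O((1-x/x_0)^2)$ together with $-x_0 q'(x_0) > 0$ yields
\[
P(x) = A_0 - A_1 \sqrt{1 - x/x_0} + O\bigl((1-x/x_0)^{3/2}\bigr) + (\text{analytic at } x_0),
\]
with $A_1 = \sqrt{-x_0\, q'(x_0)}\,/\,(4(x_0 - x_0^2)) > 0$. Since $P$ is algebraic with an isolated square-root singularity at $x_0$, it extends analytically to a $\Delta$-domain at $x_0$ (Flajolet–Sedgewick, Theorem VII.7), so the transfer theorem applies term by term:
\[
[x^n]\sqrt{1-x/x_0} = -\frac{x_0^{-n}\, n^{-3/2}}{2\sqrt{\pi}}\bigl(1 + O(1/n)\bigr), \qquad [x^n]\,O\bigl((1-x/x_0)^{3/2}\bigr) = O\bigl(x_0^{-n}\, n^{-5/2}\bigr).
\]
This yields $s(n) = K c^n n^{-3/2} + O(c^n n^{-5/2})$ with $c = 1/x_0$ and $K = A_1/(2\sqrt{\pi})$, which evaluates numerically to $K \approx 0.34313$.

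The only nontrivial step is verifying uniqueness of the dominant singularity, for which the convexity-plus-Vieta argument above suffices; the rest is a routine unpacking of Flajolet–Sedgewick singularity analysis.
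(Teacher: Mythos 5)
Your proposal is correct and follows essentially the same route as the paper: isolate the square-root factor $\sqrt{1-6x+7x^2-2x^3+x^4}$, expand locally at the smallest positive root $x_0$, and transfer the $(1-x/x_0)^{1/2}$ singularity to $n^{-3/2}$ coefficient asymptotics --- the paper invokes Darboux's theorem after rescaling $x \mapsto x_0 x$ rather than the Flajolet--Odlyzko transfer theorem, but the computation is the same. Your verification that $x_0$ is the unique dominant singularity (convexity of the radicand plus Vieta, and the removability of $x=0$ and $x=1$) is a detail the paper asserts without proof, so it is a welcome addition rather than a divergence.
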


\begin{proof}
    For any power series $f$ and $n \geq 0$, let $[x^n]\{f(x)\}$ denote the coefficient of $x^n$ in the series $f(x)$. We wish to find asymptotics for $s(n) = [x^n] \{ P(x) \}$.
    
    Let $P^*(x) := \frac{\sqrt{1-6x+7x^2-2x^3+x^4}}{4(x^2-x)}$. Note that to obtain asymptotics for $s(n)$, it suffices to compute asymptotics for the coefficients of the power series of $P^*(x)$ at $x=0$, since we can verify that 
    \[P(x) - P^*(x) = \frac{-1+x+x^2}{4(x^2-x)} = -\frac{1}{4}\sum_{n=-1}^{\infty}x^n
    \]
    contributes only constant terms to $s(n)$.
    
    In particular, we define $Q(x) := P^*(x_0x)$, where $x_0$ is the smallest positive root of $1-6x+7x^2-2x^3+x^4 = 0$ (and thus the singularity of $P^*(x)$ with smallest magnitude). Then we have that the smallest singularity of $Q(x)$ occurs at $x = 1$, so we can write $Q(x) = (1-x)^{\beta} R(x)$ where $\beta = \frac{1}{2}$ and $R(x)$ is analytic in some disk $|x| < 1 + \varepsilon$ where $\varepsilon > 0$. Let $R(x) = \sum_{j=0}^{\infty} r_j(1-x)^j$. Then Darboux's Theorem \cite{darboux_1878, wilf_gfology} tells us that
    \[
    [x^n]\{(1-x)^{\beta}R(x)\} = [x^n]\left\{\sum_{j=0}^{m}r_j(1-x)^{\beta + j}\right\} + O(n^{-m-\beta-2}).
    \]
    Plugging in $m=0$ and $\beta = \frac{1}{2}$ gives us
    \[
    [x^n]\{(1-x)^{1/2}R(x)\} = [x^n]\{r_0(1-x)^{1/2}\} + O(n^{-5/2}).
    \]
    Note that $r_0 = R(1)$, which is well-defined and computable. We can also compute $[x^n]\{(1-x)^{1/2}\}$ using Stirling's approximation as follows: 
    \begin{align*}
        [x^n]\{(1-x)^{1/2}\} &= -(-1)^{n-1} \binom{\frac{1}{2}}{n} \\
        &= -\left(\frac{1}{2}\right)^n \frac{1 \cdot 3 \cdot \dots \cdot (2n-3)}{n!} \\
        &= -\left(\frac{1}{4}\right)^n \frac{1}{2n-3} \frac{(2n)!}{(n!)^2} \\
        &= -\left(\frac{1}{4}\right)^n \frac{1}{2n-3} \frac{\left(\frac{2n}{e}\right)^{2n} \sqrt{2\pi (2n)}}{\left(\left(\frac{n}{e}\right)^{n} \sqrt{2\pi n}\right)^2} \cdot \left(1+O(n^{-1})\right) \\
        &= -\frac{1}{\sqrt{4\pi}} n^{-3/2} \cdot \left(1+O(n^{-1})\right) \\
        &= -\frac{1}{\sqrt{4\pi}} n^{-3/2} + O(n^{-5/2}).
    \end{align*}

    Thus, we have 
    \begin{align*}
        [x^n]\{Q(x)\} &= -\frac{R(1)}{\sqrt{4\pi}} n^{-3/2} + O(n^{-5/2})\\
        \Rightarrow [x^n]\{P^*(x)\} &= -\left(\frac{1}{x_0}\right)^n \frac{R(1)}{\sqrt{4\pi}} n^{-3/2} + O\left(\left(\frac{1}{x_0}\right)^n n^{-5/2}\right) \\
        \Rightarrow s(n) &= K c^n n^{-3/2} + O(c^n n^{-5/2}),
    \end{align*}
    where $K = -\frac{R(1)}{\sqrt{4\pi}} = 0.34313 \cdots$ is a constant, $c = \frac{1}{x_0}$, and $x_0 = \frac{1}{2}\left(1-\sqrt{8\sqrt{2} - 11}\right)$ is the smallest solution to $1-6x+7x^2-2x^3+x^4 = 0$, as desired.
\end{proof}

We can extend the previous result to a more refined enumeration of the $1$-stack-sortable sock patterns under $\phiaba$ of a given length $n$ based on the number of distinct socks that appear. We can thus define a $q$-analogue for the statistic $C(p)$, and find a closed form for its generating function as stated in Theorem~\ref{thm:fine-grained-enum}. Our proof is analogous to the proof of Theorem~\ref{thm:aba-sortable-coarse-enum}. 

\begin{proof}[Proof of Theorem~\ref{thm:fine-grained-enum}]
     Let $s(n,r)$ denote the number of sock patterns of length $n$ that are $1$-stack-sortable under $\phiaba$ and have $r$ distinct socks. Again, for $p = x^{l_1} s_1 x^{l_2} s_2 \cdots x^{l_m} s_m x^{l_{m+1}}$ to be $1$-stack-sortable, we must have that $s_i$ are individually $1$-stack-sortable under $\phiaba$ for all $i \in [m]$ (where the $s_i$ are potentially nonempty). Furthermore, if $p$ has $r$ distinct socks, then we have to choose how to cover those $r$ socks across the $m$ sock patterns $s_1, \dots, s_m$.

    We again split into two cases. When $m=0$, we have that there is exactly one sock pattern of length $n$ that is $1$-stack-sortable under $\phiaba$, namely $p = x^n$. This pattern contains one distinct sock, so we can capture these sock patterns in the sum $\sum_{n > 0} x^n q^1 = \frac{qx}{1-x}$.
    
    Now suppose $m > 0$, so we have a nonzero number of nonempty sock patterns $s_i$. Suppose $s_i$ has length $n_i > 0$ and $r_i > 0$ distinct socks for $i \in [m]$. For each nonempty $s_i$, we can again either choose to i) have the first sock of $\phiaba(s_i)$ be the same as the last sock of $\phiaba(s_{i-1})$, or ii) have $\phiaba(s_i)$ (and therefore $s_i$) have disjoint socks from $\phiaba(s_{i-1})$. In the former, the total number of distinct socks added by appending $s_i$ is $r_i - 1$, and in the latter, the total number of distinct socks added by adding $s_i$ is $r_i$. Thus, our contributions to $P_{[q]}(x)$ look like the product of either $s(n_i, r_i) x^{n_i} q^{r_i}$ or $s(n_i, r_i) x^{n_i} q^{r_i-1}$ terms over all $i \in [m]$, which we can factor as $P_{[q]}(x)^m (1+q^{-1})^m$. We can thus write the following relation on $P_{[q]}(x)$:

    \begin{align*}
        P_{[q]}(x) &= \frac{qx}{1-x} + \sum_{m \geq 1} \left(P_{[q]}(x)^m (1+q^{-1})^{m-1} \left(\sum_{\substack{l_1, \dots, l_m > 0 \\ l_{m+1} \geq 0}} x^{l_1 + \cdots + l_{m+1}} q \right)\right)\\
        &= \frac{qx}{1-x} + \sum_{m \geq 1} \left(P_{[q]}(x)^m (1+q^{-1})^{m-1} \left(\frac{x}{1-x}\right)^m \left(\frac{1}{1-x}\right) q \right) \\
        &= \frac{qx}{1-x} + \frac{q}{(1+q^{-1})(1-x)} \sum_{m \geq 1} \left(P_{[q]}(x)^m (1+q^{-1})^{m} \left(\frac{x}{1-x}\right)^m\right) \\
        &= \frac{qx}{1-x} + \frac{q}{(1+q^{-1})(1-x)} \cdot \frac{P_{[q]}(x)(1+q^{-1})\left(\frac{x}{1-x}\right)}{1-P_{[q]}(x)(1+q^{-1})\left(\frac{x}{1-x}\right)}.
    \end{align*}

    We can now easily solve this quadratic in $P_{[q]}(x)$ to get the closed form
    \[
    P_{[q]}(x) = \frac{(-q+(q^2+2q)x-(q^2+2q)x^2) + q\sqrt{1-2(q+2)x+(q^2+2q+4)x^2-2q^2x^3 + q^2x^4}}{2(q+1)(x^2-x)},
    \]    
    as desired.
\end{proof}

After obtaining a closed form and asymptotics for the number of $1$-stack-sortable sock patterns of a given length, it is natural to ask whether or not there is a simple way to describe what $1$-stack-sortable sock sequences look like. It turns out that even though we know recursively what the map $\phiaba$ behaves like, we cannot easily characterize the set of $k$-stack-sortable sock sequences. The following example shows that sortability is not closed under containment, that is, there exist sock sequences $p$ and $q$ such that $p$ is contained in $q$, $p$ is not $1$-stack-sortable, and $q$ is $1$-stack-sortable. Take $p=abcabc$ and $q=babcabc$. Then $\phiaba(p) = cbcbaa$ is unsorted and $\phiaba(q) = aaccbbb$ is sorted. Therefore, the set of $1$-stack-sortable sock sequences under $\phiaba$ cannot be characterized as all sock sequences that avoid some fixed set of sock patterns. Note that this contrasts with the original nondeterministic setting that Defant and Kravitz introduced based on Knuth's stack-sorting machine \cite{defant_foot_sorting_2022}. The lack of a pattern avoidance characterization of $1$-stack-sortable sock patterns is due to the determinism of the stack-sorting procedure here. It is worth mentioning that the set of $k$-stack-sortable permutations under West's stack-sorting map is also not closed under permutation pattern containment when $k>1$ \cite{west_permutations_1990}.

\section{General $\sigma$-avoiding stack-sorting algorithms}\label{sec:sigma-av-stacks}

In this section, we study the behavior of $\phisig$ for general $\sigma$ beyond the pattern $aba$. A natural question to ask is whether or not $\phisig$ will always sort any arbitrary sock sequence in $A^*$. The following proposition tells us that $\phiaba$ is the only such map.

\begin{prop}
    The only $\sigma$ for which $\phisig$ eventually sorts all sock sequences is $\sigma = aba$.
\end{prop}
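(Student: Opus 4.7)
The forward direction is immediate from Theorem~\ref{n-$1$-stack-sortable}. For the converse, the plan is to fix any $\sigma \neq aba$ and exhibit a periodic orbit of $\phi_\sigma$ consisting of unsorted sock sequences, using the palindrome $p = aba$ as a uniform candidate for all but a handful of exceptional $\sigma$.

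The key observation is that if $\sigma$ never appears as a subsequence pattern of any intermediate stack during the processing of $p$, then $\phi_\sigma$ simply pushes all of $p$ and pops in reverse, producing $\overline{p}$. When $\phi_\sigma$ greedily pushes $aba$, the stacks that arise (read top to bottom) are $\varnothing$, $[a]$, $[b,a]$, $[a,b,a]$, whose subsequence patterns are exactly $\{a,\, aa,\, ab,\, aba\}$. Hence for every $\sigma \notin \{a, aa, ab, aba\}$, no push is blocked and the output equals $\overline{aba} = aba$, so the unsorted sequence $aba$ is a fixed point of $\phi_\sigma$ and the desired counterexample.

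The remaining exceptional patterns will be handled separately. For $\sigma = a$, any attempted push creates the pattern $a$ in the stack, so $\phi_\sigma$ deadlocks on every nonempty input and sorts nothing. For $\sigma = ab$, I plan to run the procedure by hand: although each push is blocked (any two distinct socks in the stack already form the pattern $ab$), each sock ends up being pushed onto an emptied stack and then immediately popped, giving $\phi_{ab}(aba) = aba$, so $aba$ remains a fixed point. For $\sigma = aa$, the sequence $aba$ is sent to the sorted sequence $baa$, so instead I pass to the longer candidate $p = abab$; a short step-by-step computation shows $\phi_{aa}(abab) = baba$ and $\phi_{aa}(baba) = abab$, yielding a 2-cycle of unsorted sequences.

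The main obstacle is the $\sigma = ab$ case, where the abstract ``push-everything'' shortcut fails because $ab$ genuinely occurs as a sub-pattern of the intermediate stacks; there, establishing that $aba$ remains fixed relies on a small but careful direct trace of the stack dynamics. The other two exceptional cases each reduce to a single short computation, and the generic case is the clean reverse-of-the-input argument outlined above.
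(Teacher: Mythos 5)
Your proposal is correct and follows essentially the same route as the paper: both arguments exhibit unsorted periodic points by choosing a witness whose intermediate stacks never contain $\sigma$, using $aba$ as a fixed point for every $\sigma \neq aba$ of length at least $3$ and then checking the short patterns by direct computation. The only differences are cosmetic---you use $aba$ and $abab$ where the paper uses $abcabc$ for the cases $\sigma = ab$ and $\sigma = aa$ (all of these traces check out), and you additionally dispose of the degenerate pattern $\sigma = a$, which the paper's proof silently omits.
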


\begin{proof}
    Note that for any $\sigma \neq aba$ such that $|\sigma| \geq 3$, then $\phisig(aba) = aba$ and thus will never sort $aba$. Thus, we only need to consider when $|\sigma| \leq 2$. There are two patterns of length two, namely $ab$ and $aa$. We can easily verify that the sock sequence $abcabc$ gets taken to itself and to its reverse under $\phi_{ab}$ and $\phi_{aa}$, respectively. 
\end{proof}

The question above is not particularly interesting, since we simply show that there exist short sock sequences that cannot be sorted by $\phisig$ when $\sigma$ is sufficiently large. We would like to say something stronger. In the permutation setting, it makes sense to group permutations of a given length $n$ (since passes through the stack will never change the length of the input permutation) and say something about the existence of unsortable permutations for all $n$. Similarly, in the sock sequence setting, we group sock sequences $S(M)$ on a given multiset $M$. Now we can ask for which $(\sigma, M)$ do we have that any sock sequence within $S(M)$ will eventually become sorted under $\phisig$. We have the following proposition, which tells us that for an infinitely large class of $\sigma$, the map $\phisig$ cannot eventually sort all sock sequences in \emph{any} $S(M)$ (unless every sock sequence of $S(M)$ is already sorted).

\begin{prop}\label{prop:unsortable-for-almost-all-sigma}
    Let $\sigma$ be an unsorted sock pattern that is not of the form $a\cdots a b a \cdots a$. Then for any multiset of socks $M$ such that not all sock sequences in $S(M)$ are sorted, there exists a sock sequence $p^* \in S(M)$ that is not $k$-stack-sortable under $\phisig$ for any $k$.  
\end{prop}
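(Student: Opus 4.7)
The plan is to find, for each valid pair $(\sigma, M)$, an unsorted sock sequence $p^* \in S(M)$ such that both $p^*$ and its reversal $\overline{p^*}$ avoid $\sigma$ as a sock pattern. The key observation enabling this reduction is that if $\overline{p}$ avoids $\sigma$, then $\phi_\sigma(p) = \overline{p}$: at every intermediate step the stack read top-to-bottom is a subsequence of the reverse of the already-pushed prefix of $p$, hence a subsequence of $\overline{p}$, so the stack never contains $\sigma$ and no push is ever blocked, producing $\overline{p}$ once the input is exhausted. In particular, if both $p^*$ and $\overline{p^*}$ avoid $\sigma$, then $\phi_\sigma(p^*) = \overline{p^*}$ and $\phi_\sigma(\overline{p^*}) = p^*$, giving a $2$-cycle (or a fixed point if $p^*$ is a palindrome). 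Since reversal preserves unsortedness, $p^*$ is never sorted under any iteration of $\phi_\sigma$.

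To construct such a $p^*$, I would case on the structure of $\sigma$. Since $\sigma$ is unsorted and not of the form $a\cdots a b a \cdots a$, either (A) $\sigma$ has at least three distinct socks, or (B) $\sigma$ has exactly two distinct socks, each with multiplicity at least two in $\sigma$. In case (B), a preliminary observation is that $\sigma$ must contain either $abab$ or $abba$ as a $4$-length subpattern, since any $4$-length subsequence of $\sigma$ with two of each sock standardizes to one of $aabb, abab,$ or $abba$, and if all such subsequences were the sorted $aabb$ then $\sigma$ itself would be sorted. If $\sigma$ contains $abab$, then the construction $p^* = x_1 \cdot x_2^{k_2} \cdots x_m^{k_m} \cdot x_1^{k_1-1}$ (where $x_1 \in M$ has multiplicity $k_1 \geq 2$ and the middle is the sorted concatenation of the remaining socks) works, because each $2$-sock restriction of $p^*$ has the shape $a b^j a^l$, which avoids $abab$. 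When $\sigma$ contains $abba$ but not $abab$ (so $\sigma$ has block form $a^{n_1} b^{m_1} a^{n_2}$ with $m_1 \geq 2$), a more delicate interleaving construction of $p^*$ is required. In case (A), if $M$ has at most two distinct socks then any unsorted $p^* \in S(M)$ trivially avoids $\sigma$; if $M$ has three or more distinct socks, a construction based on an almost-sorted arrangement of $M$ with a carefully placed adjacent swap can be tuned so that every $\geq 3$-sock subpattern of $p^*$ is structurally incompatible with $\sigma$.

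The main obstacle will be the subcase of (B) in which $\sigma$ contains $abba$ but not $abab$ and $M$ has multiple socks of multiplicity $\geq 2$: the naive ``sorted-middle'' constructions then produce a $2$-sock restriction of the form $a b^j a^l$ with $j \geq 2$ that contains $abba$, so $p^*$ must interleave the high-multiplicity socks more carefully (for example, using sandwich-style blocks $x_i x_j x_i$ or exploiting the specific block lengths $n_1, m_1, n_2$ of $\sigma$). In every case, after constructing $p^*$ one must still verify that $p^*$ is unsorted, uses exactly the multiplicities of $M$, and that both $p^*$ and $\overline{p^*}$ avoid $\sigma$; the verification for $\overline{p^*}$ usually follows by a symmetric argument.
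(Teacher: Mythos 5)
Your reduction is exactly the one the paper uses: exhibit an unsorted $p^* \in S(M)$ avoiding both $\sigma$ and $\overline{\sigma}$, so that $\phisig$ swaps $p^*$ with its reversal forever. Your justification that $\phisig(p)=\overline{p}$ whenever $\overline{p}$ avoids $\sigma$ is correct (and spelled out more carefully than in the paper), and your construction for the subcase where $\sigma$ contains $abab$, namely $p^* = x_1 x_2^{k_2}\cdots x_m^{k_m}x_1^{k_1-1}$ with every two-sock restriction of shape $ab^ja^l$, is sound.

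The problem is that the two cases you explicitly flag as obstacles are never resolved, and they carry the weight of the proof. For the subcase where $\sigma$ contains $abba$ but not $abab$, the construction you are missing is: take the sorted sequence $a_1^{k_1}a_2^{k_2}\cdots a_m^{k_m}$ and swap the last $a_1$ with the first $a_2$, giving $a_1^{k_1-1}a_2a_1a_2^{k_2-1}a_3^{k_3}\cdots a_m^{k_m}$. Every occurrence of $aba$ in this sequence has exactly one sock strictly between its two outer letters, so the sequence avoids every pattern containing $abba$ or $abca$, and this family of patterns is closed under reversal; this also disposes of your case (A) when the third sock of $\sigma$ sits between the outer pair of an $aba$ occurrence. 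The remaining three-sock configurations (where $\sigma$ contains $abac$ or its reverse $caba$) are handled by your $abab$-type construction, since there the only occurrences of $aba$ involve the isolated copy of $x_1$ at one end and no letter is available to play the outside sock. So the effective case split is not on the number of distinct socks of $\sigma$ but on which of the length-four patterns $abba,abca$ versus $abab,abac,caba$ it contains (these exhaust all unsorted $\sigma$ not of the form $a\cdots aba\cdots a$). As written, your proposal correctly locates the difficulty but leaves it open, so the argument is incomplete.
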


\begin{proof}

It suffices to show that there exists an unsorted sock sequence $p$ such that $p \in \Av(\sigma, \overline{\sigma})$, as applying $\phisig$ to any sock sequence $p$ that avoids $\sigma$ just reverses $p$. Suppose that $M$ consists of distinct socks $a_1, a_2, \dots, a_m$. Without loss of generality, suppose there are at least two copies of $a_1$. We know that $\sigma$ contains $aba$ because it is unsorted. Thus we can consider the following cases for $\sigma$ and in each case we explicitly construct a sock sequence $p^*$ that avoids both $\sigma$ and $\overline{\sigma}$:

\textbf{Case 1:} $\sigma$ contains $abba$ or $abca$. Let $p = a_1 \cdots a_1 a_2 \cdots a_2 \cdots a_n \cdots a_n$ be a sorted sock sequence. Consider the sock sequence 
\[p^* = a_1 \cdots a_1 a_2 a_1 a_2 \cdots a_2 a_3 \cdots a_3 \cdots a_m \cdots a_m\]
obtained by swapping the last $a_1$ and the first $a_2$. Then $p^*$ avoids both $\sigma$ and $\overline{\sigma}$ because the only occurrence of $aba$ would either consist of the last two occurrences of $a_1$ or the first two occurrences of $a_2$, both of which only contain one sock between them.

\textbf{Case 2:} $\sigma$ contains $abab, abac$ or $caba$. Then we consider the sock sequence 
\[p^* = a_1 \cdots a_1 a_2 \cdots a_2 \cdots a_m \cdots a_m a_1.\]
This cannot contain an occurrence of $\sigma$ because the only opportunity to contain $aba$ as a pattern is to contain one of the occurrences of $a_1$ at the beginning and the last occurrence $a_1$. Then there would be no more socks on either end of the sock sequence to represent the sock not equal to $a_1$ in the pattern $\sigma$. For the same reason, $p^*$ cannot contain an occurrence of $\overline{\sigma}$.

Note that these cases are exhaustive, as we assume $\sigma$ is not of the form $a \cdots a b a \cdots a$. Therefore we have found a sock sequence $p^*$ that avoids $\sigma$ and $\overline{\sigma}$ for all possible $\sigma$, as desired.
\end{proof}

\section{Further Directions}\label{sec:further-directions}
In this final section, we propose several questions and directions for future study. We hope that continued investigation of deterministic stack-sorting maps for sock sequences, whether based on pattern avoidance or not, is fruitful and yields implications for the nondeterministic setting. We provide possible directions along the lines of characterization and enumeration.

Recall that we have shown that the foot-sorting map $\phiaba$ can sort any sock sequence on $n$ socks in at most $n$ iterations. Furthermore, we provided a construction $p = (a_1 \cdots a_n)^2$ to show that this bound is tight. We may ask whether there are other sock sequences for which the bound is tight. 

\begin{question}
    What are all sock sequences on $n$ socks that are not $(n-1)$-sortable under $\phiaba$?
\end{question}

It could be interesting to characterize the sock sequences on $n$ distinct socks that are not $(n-1)$-stack-sortable. Beyond $(n-1)$-sortable, it would also be interesting to characterize the sock sequences that are $k$-stack-sortable and not $(k-1)$-stack-sortable for other values of $k$. For example, while there is no pattern avoidance characterization of the $1$-stack-sortable sock sequences, it would be interesting to see whether or not there is still some nice characterization.

Along the lines of characterization, it could also be interesting to characterize an alternate class of periodic points. When looking at $\phiaba$, we know that all sock sequences eventually get sorted, and thus the only periodic points are the sorted sock sequences, which are precisely those avoiding the sock pattern $aba$. Any sock sequence avoiding $\sigma$ and $\overline{\sigma}$ is a periodic point under $\phisig$ with period two. Recall that we have also shown in Proposition~\ref{prop:unsortable-for-almost-all-sigma} that for almost all $\sigma$, we can find an unsorted periodic point avoiding $\sigma$ and $\sigma$ that has period two. It is natural to ask whether or not there are periodic points under $\phisig$ with period greater than two when $\sigma \neq aba$. It could be interesting to determine for which $\sigma$ there exist periodic points under $\phisig$ with period greater than two, and to characterize such periodic points.

Also recall that we have an enumeration of the sock patterns of length $n$ that are $1$-stack-sortable under $\phiaba$. Because we do not have a nice way to understand what the $1$-stack-sortable sock sequences look like, nor do we understand how to count the number of preimages in general, it is not immediately clear how to enumerate even $2$-stack-sortable sock patterns.

\begin{question}
    How many sock patterns of length $n$ are $k$-stack-sortable under $\phiaba$ for $k > 1$?
\end{question}
 
Another line of investigation would be to look at other deterministic sorting algorithms, such as considering analogous sorting maps $\phi_{\sigma_1, \dots, \sigma_i}$ that avoid multiple sock patterns $\sigma_1, \dots, \sigma_i$ at once. It could be interesting to investigate the behavior of alternative deterministic stack-sorting algorithms on sock sequences in general.


\section{Acknowledgments}

This work was done at the University of Minnesota Duluth with support from Jane Street Capital, the National Security Agency, and the CYAN Undergraduate Mathematics Fund at MIT. The author is grateful to Joe Gallian and Colin Defant for directing the Duluth REU program and for the opportunity to participate. The author is also grateful to Mitchell Lee, Noah Kravitz, Maya Sankar, and Yelena Mandelshtam for helpful discussions and their guidance throughout the research process.

\bibliographystyle{amsplain0}
\bibliography{bibliography}

\end{document}